\newtheorem{theorem}[equation]{Theorem}
\newtheorem{thm}[equation]{Theorem}
\newtheorem{lem}[equation]{Lemma}
\newtheorem{cor}[equation]{Corollary}
\newtheorem{rem}[equation]{Remark}
\def\H{\mathbb H}
\title{Examples of aspherical Alexandrov spaces}
\author{Michael Kapovich}
\begin{document}
\maketitle
%\section{}
%\subsection{}

\begin{abstract}
We construct examples of 3-dimensional compact aspherical Alexandrov spaces without boundary which are not 
topological manifolds. 
\end{abstract}

The goal of this note (prompted by \cite{471666}) is to prove:

\begin{theorem}\label{thm:main}
There exist  3-dimensional compact aspherical Alexandrov spaces without boundary which are not topological manifolds. 
\end{theorem}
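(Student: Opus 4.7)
\emph{Proof plan.} I would construct $X = |M/\sigma|$ as the underlying topological space of the orbifold quotient of a closed aspherical Riemannian $3$-manifold $M$ by an orientation-reversing isometric involution $\sigma$ whose fixed set is a nonempty finite set $\{p_1,\dots,p_k\}$. The local model at each $p_i$ is the antipodal action of $\mathbb{Z}/2$ on $\mathbb{R}^3$, so a neighborhood of the image $\bar p_i$ in $X$ is the open cone $c(\mathbb{RP}^2)$.

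Three of the four required properties are essentially formal. First, $X$ is an Alexandrov space with curvature bounded below (by $-1$ if $M$ is hyperbolic), since a quotient of a Riemannian manifold by a finite group of isometries inherits the same Alexandrov lower curvature bound (Burago--Gromov--Perelman). Second, $X$ has no boundary: each tangent link is either $S^2$ (at manifold points) or $\mathbb{RP}^2$ (at each $\bar p_i$), both closed $2$-dimensional Alexandrov spaces of curvature $\geq 1$. Third, $X$ is not a topological manifold at $\bar p_i$, since the local homology of $c(\mathbb{RP}^2)$ at the vertex is $H_*(\mathbb{RP}^2)\neq H_*(S^2)$, so $\bar p_i$ has no Euclidean neighborhood.

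The content is asphericity, and a naive choice of $(M,\sigma)$ fails: $M=T^3$ with $\sigma=-\mathrm{Id}$ yields a simply connected $X$ with $H_2(X;\mathbb{Q})\cong\mathbb{Q}^3$, which is not aspherical. I would therefore take $M$ to be a closed hyperbolic rational homology $3$-sphere carrying an orientation-reversing isometric involution $\sigma$ with isolated fixed points; such pairs can be constructed from arithmetic hyperbolic $3$-orbifolds whose singular locus is a finite set of $-\mathrm{Id}$ cone points. By transfer, $H_*(X;\mathbb{Q})=H_*(M;\mathbb{Q})^\sigma$ vanishes in positive degrees (since $\sigma$ acts by $-1$ on $H_3$), so $X$ is rationally acyclic, and by Lefschetz $\sigma$ has exactly two fixed points.

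The main obstacle is to upgrade this rational information into contractibility of the topological universal cover $\widetilde{|X|}$. One has $\widetilde{|X|}=\widetilde M/\Gamma'$, where $\widetilde M$ is the contractible universal cover of $M$ and $\Gamma'\subset\pi_1^{\mathrm{orb}}(X)=\pi_1(M)\rtimes\langle\sigma\rangle$ is the subgroup normally generated by the isotropy $\mathbb{Z}/2$'s at lifts of the $p_i$. This quotient retains $c(\mathbb{RP}^2)$-singularities, and the candidate nontrivial $\pi_2$-classes---$2$-spheres obtained by capping each $\mathbb{RP}^2$-link's generating loop with a disk---must be shown to vanish. Locally each such sphere is null-homotopic through the cone on its $\mathbb{RP}^2$-link, but global compatibility requires a detailed analysis of the $\Gamma'$-action; combined with a Borel/equivariant spectral sequence ruling out residual integral torsion in $H_*(\widetilde{|X|};\mathbb{Z})$, contractibility of $\widetilde{|X|}$ follows from Hurewicz and Whitehead, yielding asphericity of $X$.
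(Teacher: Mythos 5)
There is a genuine gap, and it sits exactly where you locate it: asphericity. Your setup (quotient of a closed hyperbolic $3$-manifold by an isometric involution with isolated fixed points) is the same general framework as the paper's, and the formal parts are fine: the quotient is Alexandrov with the same lower curvature bound, has no boundary, and the $c(\mathbb{RP}^2)$ points are non-manifold points by local homology. But the rational-homology-sphere strategy does not address asphericity. The transfer isomorphism $H_*(X;\mathbb{Q})\cong H_*(M;\mathbb{Q})^\sigma$ is a statement about the \emph{compact} quotient only; transfer is unavailable for the infinite cover $\widetilde{|X|}=\widetilde M/\Gamma'$, and rational acyclicity of $X$ neither implies nor is implied by vanishing of $\pi_2(\widetilde{|X|})$. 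Whether $\widetilde M/\Gamma'$ is contractible depends delicately on the algebraic structure of $\Gamma'$, the normal closure of the elliptic involutions inside $\pi_1^{\mathrm{orb}}$, and the QHS hypothesis gives no control over that subgroup: a ``generic'' choice of $(M,\sigma)$ produces a non-aspherical quotient, and nothing in your hypotheses rules this out. The concluding sentence (``a detailed analysis of the $\Gamma'$-action; combined with a Borel/equivariant spectral sequence \dots contractibility follows from Hurewicz and Whitehead'') is a placeholder rather than an argument: the Borel construction differs from the honest quotient precisely at the isotropy, and there is no mechanism in your outline forcing $H_2(\widetilde M/\Gamma';\mathbb{Z})$ to vanish. (A secondary point: the existence of a hyperbolic rational homology sphere admitting an orientation-reversing isometric involution with isolated fixed points is asserted, not established; but even granting it, the main gap remains.)

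For comparison, the paper supplies exactly the missing control by building $M$ and $\sigma$ in a special way. One starts with a compact hyperbolic $3$-manifold $M'$ with totally geodesic boundary, doubles it across an involution $\tau$ of $\partial M'$ with isolated fixed points, and takes $\sigma$ to be the swap involution (made isometric via Mostow and Waldhausen). The key step is group-theoretic: after passing to a suitable finite cover (using Dahmani--Guirardel--Osin together with Sun's theorem, which apply because the peripheral structure is relatively hyperbolic and $\pi_1$ is residually finite), the normal closure of the peripheral subgroups becomes a \emph{free product} of the boundary surface groups. Consequently the relevant infinite (peripheral) cover of the quotient is homotopy equivalent to a wedge of the quotient surfaces $\partial M'/\tau$, which are chosen to have genus $\ge 1$ and are therefore aspherical; asphericity then descends to the compact quotient because the orbi-covering is an honest covering of underlying spaces. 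Some input of this kind --- an identification of the universal (or a suitable intermediate) cover of $|X|$ with an explicitly aspherical model --- is what your plan needs and currently lacks.
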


This disproves Conjecture 5.1 in \cite{survey}.

\medskip
We will construct these spaces as quotients of closed hyperbolic 3-manifolds by isometric involutions with isolated fixed points (and nonempty fixed point sets). These quotient spaces have isolated conical singularities which are cones over projective planes. The projection of the hyperbolic path-metric to the quotient gives this quotient the structure of an Alexandrov space (see e.g. \cite{AKP}).  
With a bit more work, this construction extends in all higher dimensions. Of course, not every space obtained this way will be aspherical, but some will. 

We will start with a compact connected orientable 3-manifold $M$ with nonempty boundary, which is aspherical, irreducible, boundary irreducible, atoroidal and acylindrical. We will also assume that each boundary component has genus $\ge 2$. By Thurston's theorem, $M$ admits structure of a convex hyperbolic manifold with totally geodesic boundary. One can also construct such manifolds using compact arithmetic hyperbolic manifolds of simplest type: After passage to a finite cover, every such manifold contains an embedded nonseparating totally geodesic hypersurface, see \cite{Millson}. Cutting along such a hypersurface results in a compact hyperbolic manifold with two  totally geodesic boundary components.

Let $\partial_i M, i\in I$, denote the boundary components of $M$. (The reader can assume that $\partial M$ is connected.) The covering of $M$ with pull-back hyperbolic metric  is isometric to a closed convex subset of $\H^3$ with nonempty totally geodesic boundary. 
 
Let $\tau: \partial M\to \partial M$ denote an orientation-preserving involution, such that $\tau$ preserves each boundary component and has nonempty fixed point set (which is necessarily finite). %Let $\tau_i$ denote the restriction of $\tau$ to $\partial_i M$. 
The quotient $M/\tau$ has a natural orbifold structure $\mathcal O$ with nonempty singular set and empty boundary, such that singularities are cones over projective planes. Another way to describe $\mathcal O$ is as follows. We define the {\em double of 
$M$ with respect to $\tau$}, $D_\tau M$ by
$$
D_\tau M= (M\times \{0,1\})/\sim, (x,0)\sim (x,1), x\in \partial M. 
$$
The involution $\tau$ extends to an involution $\sigma$ of $D_\tau M$, swapping the components of the double and 
preserving the boundary of $M$:
$$
\sigma(x,0)=(x,1). 
$$
 The double $D_\tau M$ is an orientable, closed, Haken and atoroidal, hence, by Thurston's theorem, admits a hyperbolic metric. Thus, by the Mostow Rigidity Theorem, $\sigma$ is homotopic to an isometry of $D_\tau M$;  by Waldhausen's theorem, $\sigma$ is also isotopic to this isometry. The involution $\sigma$ has finite nonempty fixed point set.   By the construction, the quotient space $D_\tau M/\sigma$ is not a topological manifold: Projections of fixed points of $\sigma$ are the singularities. 

\begin{rem}
Note that $\partial M$ need not be isotopic to a totally geodesic submanifold in $D_\tau M$. For this to be the case, we would need $\tau$ to be isotopic to an isometry of $\partial M$, which need not be the case. However, there are many examples where $\tau$ is an isometry,   coming from arithmetic hyperbolic manifolds. 
\end{rem}

 In general, the underlying topological space $|\mathcal O|=D_\tau M/\sigma$ of $\mathcal O$  need not be aspherical. We will prove that there exists a finite regular covering $M'\to M$ and an involution of $\partial M'$ such that applying the above construction to $M'$ we obtain a new orbifold $\mathcal O'$ whose underlying space is aspherical and which is not a manifold.

We let $p: \hat M\to M$ denote the regular covering corresponding to the normal closure $N$ in $\pi_1(M)$ of the collection of peripheral subgroups  $\pi_1(\partial_i M), i\in I$. We will refer to this as the {\em peripheral covering} of $M$. 
Then the manifold $\hat M$ has totally geodesic boundary and the covering map $p: \hat M\to M$ restricts to an isometric homeomorphism from each boundary component $\partial_j \hat M, j\in J$, to appropriate component of $\partial M$. In general, very little can be said about the subgroup $N$ and the covering space $\hat M$. However, it turns out that after applying this construction to a sufficiently deep finite index subgroup of $\pi_1(M)$, the group $N$ splits as a free product of peripheral subgroups. For a finite index normal subgroup $\Gamma'< \pi_1(M)$ and the corresponding covering $M'\to M$, we again use the notation $p': \hat M'\to M'$ for the corresponding peripheral covering and $N'$ the corresponding normal closure in $\pi_1(M')$ of the collection of peripheral subgroups.

We will need the  following consequence of \cite{DGO} and \cite[Theorem 2.5]{Sun}:

\begin{thm}
There exists a finite regular covering $M'\to M$ such that the normal subgroup $N'< \pi_1(M')$ splits as a free product of subgroups $\pi_1(\partial_j \hat M')$, where $j\in J'$ index boundary components of $\hat M'$. 
\end{thm}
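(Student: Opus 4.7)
The plan is to combine the group-theoretic Dehn filling of Dahmani--Guirardel--Osin \cite{DGO} with the virtual peripheral separation result \cite[Theorem 2.5]{Sun}, and then identify the peripheral cover's fundamental group via Bass--Serre theory.

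First, I would verify that $\{H_i := \pi_1(\partial_i M)\}$ is a hyperbolically embedded collection in $G := \pi_1(M)$. Because $M$ has totally geodesic boundary, $G$ is convex cocompact in $\mathrm{Isom}(\H^3)$ and hence word-hyperbolic, and each $H_i$ is quasiconvex. The acylindricity of $M$ makes the collection $\{H_i\}$ almost malnormal in $G$, and by Bowditch's criterion (as recorded in \cite{DGO}) an almost malnormal quasiconvex collection in a hyperbolic group is hyperbolically embedded.

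Next I would invoke the DGO Dehn-filling theorem: for a sufficiently deep choice of finite-index normal subgroups $K_i \triangleleft H_i$, the normal closure $K := \langle\!\langle K_i : i \in I\rangle\!\rangle_G$ decomposes as a free product
\[
K \;\cong\; \ast_{\alpha}\; g_\alpha K_{i(\alpha)} g_\alpha^{-1}
\]
indexed by an appropriate set of coset representatives. I would then apply \cite[Theorem 2.5]{Sun} to produce a finite-index normal subgroup $\Gamma' \triangleleft G$ with $\Gamma' \cap H_i = K_i$ for every $i$. Let $M' \to M$ be the associated finite regular cover, so $\pi_1(M') = \Gamma'$. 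Since $\Gamma'$ is normal in $G$, each peripheral subgroup of $M'$ takes the form $g(\Gamma' \cap H_i)g^{-1} = gK_ig^{-1}$, hence lies inside $K$; consequently $N' \subseteq K$ as well.

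To conclude, I would analyze the $N'$-action on the Bass--Serre tree $T$ of the free-product decomposition of $K$. Edge stabilizers are trivial, and a short calculation using $\Gamma' \cap H_i = K_i$ together with the normality of $\Gamma'$ yields $N' \cap g_\alpha K_{i(\alpha)} g_\alpha^{-1} = g_\alpha K_{i(\alpha)} g_\alpha^{-1}$, so that the vertex stabilizers of $N'$ acting on $T$ are precisely the peripheral subgroups $\pi_1(\partial_j \hat M')$. By Bass--Serre theory (equivalently, the Kurosh subgroup theorem applied to $N' \le K$), $N'$ decomposes as a free product of these vertex stabilizers and a free group $F$; since $N'$ is by definition generated by conjugates of peripheral subgroups, i.e.\ by those same vertex stabilizers, the free summand $F$ must be trivial, yielding the desired decomposition. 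The main obstacle will be the precise compatibility between DGO and Sun: the filling parameters $K_i$ produced by DGO must be realizable as $\Gamma' \cap H_i$ for some finite-index normal $\Gamma' \triangleleft G$, and arranging this simultaneously for all $i$ is exactly the content of \cite[Theorem 2.5]{Sun}. A secondary subtlety is checking the vanishing of the Kurosh free summand $F$, equivalently that the quotient graph $N'\backslash T$ is a tree.
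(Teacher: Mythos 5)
Your proposal is correct and is essentially the argument the paper intends: the paper offers no details beyond citing \cite{DGO} and \cite[Theorem 2.5]{Sun} after noting residual finiteness of $\pi_1(M)$ and relative hyperbolicity (hence malnormality) of the peripheral collection, which is exactly the package you assemble. The one point to phrase carefully is the order of quantifiers: DGO does not hand you specific $K_i$ to be realized afterwards as $\Gamma'\cap H_i$ (exact realization of prescribed filling kernels is a separability statement you do not have); rather, DGO works for all $K_i$ avoiding a finite set of nontrivial elements, so one first uses residual finiteness to choose the finite-index normal $\Gamma'\triangleleft G$ missing that finite set and then sets $K_i:=\Gamma'\cap H_i$ --- which is the form in which Sun's theorem is applied. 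Finally, since $\Gamma'$ is normal, every peripheral subgroup of $M'$ is a $G$-conjugate $gK_ig^{-1}$, so $N'$ is equal to the DGO kernel $K$ (not merely contained in it), and your concluding Kurosh/Bass--Serre step can be skipped: the DGO free-product decomposition, with factors identified with the $\pi_1(\partial_j\hat M')$, is already the desired statement.
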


In order to apply the results of \cite{DGO} and \cite{Sun} we note that $\pi_1(M)$ is residually finite and that 
$(\pi_1(M), \{\pi_1(\partial_i M): i\in I\})$ is a relatively hyperbolic group, since $M$ has geodesic boundary. (The latter ensures that each peripheral subgroup of $\pi_1(M)$ is malnormal and conjugates of distinct peripheral subgroups have trivial intersections.) 
 
Now, pick an interior  point $x\in \hat M'$ and connect $x$ to each boundary component by a simple arc, so that these arcs intersect only at $x$. This results in an infinite graph $Y\subset \hat M'$. Let $X$ denote the union of this graph and $\partial \hat M'$. The CW complex $X$ is connected, its fundamental group is naturally a free product of the groups $\pi_1(\partial_j \hat M'), j\in J$. Hence, the inclusion map 
$X\to \hat M'$ induces an isomorphism of fundamental groups and is, therefore, a homotopy-equivalence (since both spaces are aspherical). Moreover, we obtain a homotopy-equivalence of pairs
$$
(X, \partial \hat M')\to (\hat M', \partial \hat M'). 
$$

We now choose an involution $\tau: \partial M'\to \partial M'$ as above, such that each component of $\partial M'/\tau$ 
has genus $\ge 1$.  Let $\hat \tau: \partial \hat M'\to \partial \hat M'$ be a lift of the involution $\tau$ such that $\hat\tau$ preserves each boundary component. Then taking the quotient orbifolds ${\mathcal O}'= M'/\tau$ and $\hat{\mathcal O}'= \hat M'/\hat \tau$, the covering map $p': \hat M'\to M'$ descends to an orbi-covering map $q': \hat{\mathcal O}'\to \mathcal O'$. By the construction, 
this orbi-covering is an ordinary covering of the underlying spaces of these orbifolds. Hence, 
$|\hat{\mathcal O}'|$ is aspherical if and only if $|\mathcal{O}'|$ is. 

\medskip 
We let $Z$ denote the quotient of $X$ by the involution $\hat\tau$.

\begin{lem}
$Z$ is aspherical. 
\end{lem}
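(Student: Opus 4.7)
My plan is to show that $Z$ is homotopy equivalent to a wedge (possibly infinite) of closed orientable surfaces of genus $\geq 1$, and then invoke the fact that a wedge of aspherical CW-complexes is aspherical.

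First, I would arrange that the arcs comprising $Y$ are chosen so that the endpoint of each arc on $\partial_j \hat{M}'$ is a fixed point of $\hat\tau$. This is possible because $\hat\tau|_{\partial_j \hat{M}'}$ is conjugate, via the restricted covering homeomorphism, to $\tau$ on the corresponding component of $\partial M'$, and thus has nonempty finite fixed set on each boundary component. With this choice, one can extend $\hat\tau$ to all of $X$ by declaring it to act as the identity on $Y$; this is compatible at the arc endpoints since they are $\hat\tau$-fixed. The quotient $Z$ is then obtained from the tree $Y$ (mapped injectively) by attaching each quotient surface $S_j := \partial_j \hat{M}'/\hat\tau$ at the endpoint of the corresponding arc.

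Next, I would identify $S_j$ topologically. Since $\hat\tau|_{\partial_j \hat{M}'}$ is an orientation-preserving involution of a closed orientable surface with isolated fixed points, a neighborhood of each fixed point in the quotient is a cone over $\mathbb{RP}^1 \cong S^1$, which is a disk; hence $S_j$ is a closed orientable topological surface. Its genus agrees with that of the corresponding component of $\partial M'/\tau$, which is $\geq 1$ by our standing choice of $\tau$. The graph $Y$, being a star centered at $x$, is a contractible tree that meets each $S_j$ at a single point, so collapsing $Y$ to $x$ yields a homotopy equivalence
\[
Z \;\simeq\; \bigvee_{j \in J'} S_j.
\]

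Finally, I would deduce asphericity of this wedge. Each $S_j$ is a $K(\pi_1(S_j),1)$. The universal cover of a pointed wedge of $K(\pi,1)$-CW-complexes has a natural tree-of-spaces structure over the Bass-Serre tree of the free product $\ast_j\, \pi_1(S_j)$: its vertex spaces are copies of the contractible universal covers $\widetilde{S_j}$, attached at single points. A tree of contractible spaces glued along points is contractible, so $\bigvee_j S_j$, and hence $Z$, is aspherical. The one point that requires care is the combinatorial setup at the arc endpoints guaranteeing that $\hat\tau$ descends to a well-defined involution of $X$; once that is arranged, the homotopy identification of $Z$ with a wedge of surfaces and the wedge-of-$K(\pi,1)$'s argument are both routine, and they handle the infinite case uniformly.
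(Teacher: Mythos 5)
Your argument is correct and follows essentially the same route as the paper: the paper's proof likewise observes that $Z$ is homotopy equivalent to a wedge of the quotient surfaces $\partial_j \hat M'/\hat\tau$, each of genus $\ge 1$ and hence aspherical, so that $Z$ is aspherical. You merely spell out details the paper leaves implicit (placing arc endpoints at fixed points, identifying the quotients as closed orientable surfaces, and the Bass--Serre/tree-of-spaces justification that a wedge of aspherical complexes is aspherical), all of which is fine.
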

\begin{proof} $Z$ is homotopy-equivalent to a wedge of copies of components of  $\partial \hat M'/\tau$. By our assumption, each of these quotient surfaces has genus $\ge 1$, hence, is aspherical. Thus, $Z$ is aspherical as well.   
\end{proof}

The homotopy-equivalence $X\to  \hat M'$ descends to a homotopy-equivalence $Z\to |\hat{\mathcal O}'|$ (since it defined a homotopy-equivalence of pairs). 

\begin{cor}
$|\hat{\mathcal O}'|$ and, hence, $|\mathcal{O}'|$ is aspherical. 
\end{cor}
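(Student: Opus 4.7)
The plan is to combine the Lemma just established with the homotopy-equivalence $Z \to |\hat{\mathcal O}'|$ asserted in the sentence immediately preceding the statement, and then transfer asphericity along the underlying covering.

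First, the Lemma gives that $Z$ is aspherical. The paragraph before the Corollary identifies $|\hat{\mathcal O}'|$ with $Z$ up to homotopy: the homotopy-equivalence of pairs $(X,\partial \hat M')\to(\hat M',\partial \hat M')$ induces a homotopy-equivalence $Z\to |\hat{\mathcal O}'|$ on the quotients by $\hat\tau$. Since asphericity is a homotopy invariant, $|\hat{\mathcal O}'|$ is aspherical.

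Next, I would use the orbi-covering $q':\hat{\mathcal O}'\to\mathcal O'$ constructed above. By the discussion preceding the Lemma, $q'$ restricts to an ordinary covering map $|\hat{\mathcal O}'|\to |\mathcal O'|$ on underlying spaces. Since any covering induces isomorphisms on $\pi_n$ for all $n\ge 2$, vanishing of the higher homotopy groups of $|\hat{\mathcal O}'|$ forces the same for $|\mathcal O'|$, so $|\mathcal O'|$ is also aspherical. This completes the Corollary.

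There is no substantive obstacle: everything has been arranged in the preceding paragraphs. If pressed to justify the descent of the homotopy-equivalence rigorously, I would realize $Z$ and $|\hat{\mathcal O}'|$ as the pushouts of $X\hookleftarrow \partial\hat M'\twoheadrightarrow \partial\hat M'/\hat\tau$ and $\hat M'\hookleftarrow \partial\hat M'\twoheadrightarrow \partial\hat M'/\hat\tau$ respectively, observe that the inclusions of $\partial\hat M'$ are cofibrations (after choosing CW structures with $\partial\hat M'$ as a subcomplex), and invoke the gluing lemma for homotopy pushouts, noting that the homotopy-equivalence of pairs restricts to the identity on the common subspace $\partial\hat M'$ that is being quotiented.
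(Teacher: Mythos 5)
Your proposal is correct and follows the paper's own (implicit) argument: combine the Lemma with the descended homotopy-equivalence $Z\to|\hat{\mathcal O}'|$, then transfer asphericity along the ordinary covering $|\hat{\mathcal O}'|\to|\mathcal O'|$ via the $\pi_n$-isomorphisms for $n\ge 2$, exactly as the preceding paragraphs set up. Your extra justification of the descent by the gluing lemma for pushouts along the cofibration $\partial\hat M'\hookrightarrow\hat M'$ (resp. $\partial\hat M'\hookrightarrow X$) correctly fills in the detail the paper only asserts parenthetically.
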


By the construction, $|\mathcal{O}'|$ has structure of an Alexandrov space (since it is a good orbifold) with empty boundary. It is not a manifold since it has conical singularities.  This proves Theorem \ref{thm:main}. 

\medskip
{\bf Acknowledgements.} I am grateful to Denis Osin for the references \cite{DGO} and \cite{Sun}. 

\newpage

\bibliographystyle{amsalpha}
\bibliography{references}

\noindent Department of Mathematics\\1 Shields Ave.\\
  University of California, Davis\\
  CA 95616, USA
  
\noindent kapovich@math.ucdavis.edu

\end{document}